\let\epsilon=\varepsilon
\newtheorem{theorem}{Theorem}[section]
\newtheorem{lemma}[theorem]{Lemma}
\newtheorem{corollary}[theorem]{Corollary}
\newtheorem{proposition}[theorem]{Proposition}
\theoremstyle{definition}
\newtheorem{definition}[theorem]{Definition}
\newtheorem{example}[theorem]{Example}
\newtheorem{remark}[theorem]{Remark}
\newcommand{\kommentaar}[1]{\begin{item}\item[]{\small\textsf{#1}}\end{itemize}}
\numberwithin{equation}{section}
\let\epsilon=\varepsilon
\let\phi=\varphi
\let\theta=\vartheta
\newcommand{\comment}[1]{}
\def\sqr#1#2{{\,\vcenter{\vbox{\hrule height.#2pt\hbox{\vrule width.#2pt
height#1pt \kern#1pt\vrule width.#2pt}\hrule height.#2pt}}\,}}
\def\bo{\sqr44\,}
\def\@maketitle{%
  \newpage
  \null
  \vskip 2em%
  \begin{center}%
  \let \footnote \thanks
    {\Large\bfseries \@title \par}%
    \vskip 1.5em%
    {\normalsize
      \lineskip .5em%
      \begin{tabular}[t]{c}%
        \@author
      \end{tabular}\par}%
    \vskip 1em%
    {\normalsize \@date}%
  \end{center}%
  \par
  \vskip 1.5em}
\begin{document}

\title[]{\large\bf  Infinite dimensional Jordan algebras and  symmetric cones}

\author{Cho-Ho Chu}

\address{School of Mathematical Sciences, Queen Mary, University of  London, London E1 4NS, UK}
 \email{ c.chu@qmul.ac.uk}

\date{}

\maketitle

Abstract.  A celebrated result of Koecher and
Vinberg asserts the one-one correspondence between the finite dimensional formally real
Jordan algebras and Euclidean symmetric cones. We extend this result to the infinite dimensional setting.\\

MSC.
 17C65; 22E65; 46B40; 46H70\\

Keywords.
Jordan algebra; Symmetric cone; Order-unit space; Banach Lie group; Banach Lie algebra

\section{Introduction}\label{sec1}

Finite dimensional formally real Jordan algebras were first introduced by Jordan, von Neumann and Wigner
 for quantum mechanics formalism in \cite{jvw}, where these algebras were completely classified. Since then, many far reaching connections to Lie algebras, geometry and analysis have been found. One such connection to geometry is the seminal result of
 Koecher \cite{k} and Vinberg \cite{v}, which establishes the one-one correspondence between the formally real Jordan algebras
 and a class of Reimmanian symmetric spaces, namely, the symmetric cones. The latter plays a useful role in the study of
 automorphic functions on bounded homogeneous domains in complex spaces  and harmonic analysis
(see, for example, \cite{fk,gpv,v} and references therein).

In recent decades, infinite dimensional Jordan algebras and Jordan triple systems have gradually become a significant part of the theory of bounded symmetric domains. While much of the theory of finite dimensional bounded
symmetric domains, which are Riemannian symmetric spaces, can be extended to infinite dimension via Jordan theory, an infinite dimensional generalisation
of symmetric cones and the result of Koecher and Vinberg has not yet been accomplished.
 Our objective in this paper is to carry out this task  by introducing infinite dimensional
symmetric cones  and show in Theorem \ref{symm} that they correspond exactly to a class of infinite dimensional real Jordan algebras with identity, called
unital {\it JH-algebras}. In finite dimensions, the unital JH-algebras are exactly the formally real Jordan algebras and
our result is identical to that of Koecher and Vinberg.

Although our approach is a natural extension of the finite dimensional one, there are some infinite dimensional
pitfalls in Lie theory and other obstructions that different arguments are required to circumvent,
for instance, a closed subgroup of an infinite
dimensional Lie group need not be a Lie group in the relative topology \cite{hk}
and an infinite dimensional orthogonal group
need not be compact and lacks an invariant measure. Unlike the finite dimensional case,
the order-unit structures play a prominent role in
infinite dimension and we make use of the weak topology as well as Kakutani's fixed-point theorem to achieve the
final result. This approach also provides some new perspectives for the finite dimensional case.
Our focus, however, is to present a  proof of
Theorem \ref{symm} as simply as possible, but not discuss all its ramifications.

For completeness, we review briefly some relevant basics of Jordan algebras and refer
to \cite{chu,up} for more details.
In what follows, a Jordan algebra $\mathcal{A}$ is a real vector
space, which can be infinite dimensional, equipped with a bilinear product $(a,b) \in \mathcal{A} \times \mathcal{A} \mapsto ab \in \mathcal{A}$
that is commutative and satisfies the {\it Jordan identity}
$$a(ba^2) = (ab)a^2 \qquad (a,b \in \mathcal{A}).$$
A Jordan algebra is called {\it unital} if it contains an identity. There are two fundamental linear operators on a
Jordan algebra $\mathcal{A}$, namely, the left multiplication $L_a: x\in \mathcal{A} \mapsto ax \in \mathcal{A}$ and the quadratic map
$Q_a : x\in \mathcal{A} \mapsto \{a,x,a\} \in \mathcal{A}$, where the Jordan triple product $\{\cdot, \cdot,\cdot\}$ is defined by
$$\{a,b,c\} = (ab)c + a(bc) - b(ac) \qquad (a,b,c \in \mathcal{A}).$$
We have the identities
$$Q_a = 2L_a^2 - L_{a^2}, \quad Q_a^2 = Q_{a^2} \qquad (a\in \mathcal{A}).$$
The operator $L_{ab} + [L_a, L_b] : \mathcal{A} \rightarrow \mathcal{A}$, where $[\cdot,\cdot]$ denotes the Lie brackets,
 is often denoted by $a\bo b$ and is called a {\it box operator}.

An element $a$ in a Jordan algebra with identity $e$ is called {\it invertible} if there exists an
element  $a^{-1}\in \mathcal{A}$ (which is necessarily unique)  such that
$aa^{-1}=e$ and $(a^2)a^{-1}=a$. This is equivalent to the invertibility of the quadratic operator $Q_a$, in which case
  $a^{-1}=Q_a^{-1}(a)$.
If the left multiplication $L_a$ is invertible, then $a$ is invertible with inverse $a^{-1}= L_a^{-1}(e)$.

A Jordan algebra $\mathcal{A}$ is called {\it formally real} if $a_1^2 + \cdots +a_n^2 =0$ implies
$a_1 = \cdots =a_n =0$ for any $a_1, \ldots, a_n \in \mathcal{A}$. A finite dimensional formally real Jordan algebra $\mathcal{A}$ is
necessarily unital (cf.\,\cite[Proposition 1.1.13]{chu}) and contains an abundance of {\it idempotents}, which are
  elements $p$ satisfying $p^2 =p$ (cf.\,\cite[Theorem 1.1.14]{chu}). It is a real Hilbert space in the trace norm
$\|a\|^2 = {\rm trace}\, (a \bo a)$ for $a\in \mathcal{A}$.

Following \cite{nomura}, we call a real Jordan algebra $\mathcal{H}$ a {\it JH-algebra} if it is also a Hilbert space
in which the inner product, always denoted by $\langle \cdot,\cdot\rangle$, is {\it associative}, that is,
\[ \langle ab,c\rangle = \langle b,ac\rangle \qquad (a,b,c \in \mathcal{H}).\]
A finite dimensional JH-algebra is called {\it Euclidean} in \cite{fk}.
In fact, the finite dimensional formally
real Jordan algebras are exactly the Euclidean Jordan algebras with identity \cite[Lemma 2.3.7]{chu}.
JH-algebras are examples of non-associative H*-algebras which have been studied by many authors and
references are detailed in \cite[p.\,222]{rod}.

Throughout, all vector spaces are over the real scalar field unless stated otherwise.

\section{Symmetric cones}

Let $V$ be a real vector space.
By a {\it cone} $C$ in $V$, we mean a  nonempty subset of $V$ satisfying
(i) $C + C \subset C$ and (ii) $\alpha C \subset C$ for all $\alpha >0$. We note that
a cone is necessarily convex. A cone $C$ is called
{\it proper} if $C\cap -C = \{0\}$.
 The partial ordering on $V$ induced by a proper cone $C$ will be denoted by
$\leq_C$, or by $\leq$ if $C$ is understood, so that $x \leq y$ whenever $y-x \in C$. Conversely, if $V$ is equipped with a partial
ordering $\leq$, we let $V_+ = \{v\in V: 0 \leq v\}$ denote the corresponding proper cone.

Given a real topological vector space $V$ and a set $E\subset V$, we will denote its closure and interior by $\overline E$ and ${\rm int}\,E$ respectively.
If $C$ is a cone in $V$, then its closure $\overline C$ is also a cone. If $C$ is an open cone,
then we have ${\rm int}\,\overline C = C$.
For completeness, we include a proof of this fact in the next lemma.

\begin{lemma}\label{2.1} Let $C$ be an open  convex set in a real topological vector space $V$. Then ${\rm int}\,\overline C = C$.
\end{lemma}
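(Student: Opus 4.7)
The inclusion $C\subset {\rm int}\,\overline C$ is immediate, since $C$ is open and contained in $\overline C$, so ${\rm int}\,\overline C$ contains the open set $C$. The work is in the reverse inclusion ${\rm int}\,\overline C\subset C$. If $C$ is empty then $\overline C$ is empty and there is nothing to prove, so assume $C\neq\emptyset$ and fix some $c_0\in C$.

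Given $x\in{\rm int}\,\overline C$, my plan is to produce a point $y\in\overline C$ slightly past $x$ on the ray from $c_0$ through $x$, and then write $x$ as a genuine convex combination of $c_0$ and $y$. Concretely, consider the continuous map $t\mapsto c_0+t(x-c_0)$ from $\R$ to $V$; at $t=1$ its value is $x$, which lies in the open set ${\rm int}\,\overline C$. Hence for some $\epsilon>0$ the point $y:=c_0+(1+\epsilon)(x-c_0)$ still belongs to $\overline C$, and
\[
x=(1-s)c_0+sy,\qquad s=\tfrac{1}{1+\epsilon}\in(0,1).
\]

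The final step is to invoke the standard TVS fact that if $C$ is convex, $a\in{\rm int}\,C$ and $b\in\overline C$, then the half-open segment $\{(1-s)a+sb:s\in[0,1)\}$ lies in ${\rm int}\,C$. Applied with $a=c_0$, $b=y$ and the $s$ above, this yields $x\in{\rm int}\,C=C$, completing the proof. The main (and essentially only) technical ingredient is this segment lemma; it is proved by choosing a balanced neighbourhood $U$ of $0$ with $c_0+U\subset C$, writing $x$ as a convex combination that exhibits $x+(1-s)U$ as contained in $\overline C$ translated by elements of $C$, and then using convexity to absorb a small neighbourhood of $x$ into $C$ itself. Since this is a routine TVS argument, I would simply cite it and apply it.
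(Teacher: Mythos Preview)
Your proof is correct and follows essentially the same approach as the paper's: both fix a point of $C$, push the given $x\in{\rm int}\,\overline C$ slightly past itself along the line through that fixed point to obtain a point of $\overline C$, and then apply the standard segment lemma (which the paper cites from Dunford--Schwartz) to conclude $x\in C$. The only cosmetic difference is that the paper phrases the ``push past'' step via the internal-point characterisation of interior points, whereas you use continuity of the affine line; these are equivalent.
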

\begin{proof} There is nothing to prove if $C$ is empty. Pick any $q\in C$.
Let $p \in {\rm int}\,\overline C$. Then $p$ is an internal point of $\overline C$, that is, every line
through $p$ meets $\overline C$ in a set containing an interval around $p$ (cf.\,\cite[p.410,\,413]{d}).
In particular, for the line joining $p$ and $q$, there exists $\delta \in (0,1)$ such that $p\pm \delta (q-p) \in \overline C$.
Since $C$ is open and $q$ is an interior point of $C$, we have $\lambda (p-\delta (q-p)) + (1-\lambda)q \in C$ for
$0<\lambda <1$ (cf.\,\cite[p.413]{d}). Hence $$p = \frac{1}{1+\delta}(p- \delta(q-p))+ \frac{\delta}{1+\delta} q
\in C.$$
\end{proof}

Given an open cone $C$ such that $\overline C \cap -\overline C =\{0\}$, we must have $0\notin C$
since every point in C is an internal point of $\overline C$.

Let $V$ be a real vector space equipped with a proper cone $V_+$ and induced partial ordering
 $\leq$. An element $e\in V$ is called an {\it order-unit} if
 $$V= \bigcup _{\lambda >0} \{x\in V: -\lambda e \leq x \leq \lambda e\}.$$
We call $V$ an {\it order-unit space} if it admits an order-unit. The real field $\mathbb{R}$ with the usual partial
ordering is an order-unit space with order-unit $1$. We note that an order-unit space $V$, with order-unit $e$,
is determined by its cone
$V_+$ completely in that $V= V_+ - V_+$. Indeed, each $x\in V$ with $-\lambda e \leq x \leq \lambda e$ can be written
as $x = x_1 - x_2$, where $2x_1 = \lambda e + x \in V_+$ and $ 2x_2 = \lambda e -x \in V_+$.

An order-unit $e\in V$ is called {\it Archimedean} if for each
$v\in V$, we have $v \leq 0$ whenever $\lambda v \leq e$ for all $\lambda \geq 0$. We call $V$ an {\it Archimedean order-unit space}
if it is equipped with an Archimedean order-unit.
An Archimedean order-unit $e\in V$ induces a norm $\|\cdot\|_e$ on $V$, called an {\it order-unit norm}, and is defined by
 \[ \|x\|_e = \inf\{\lambda>0: -\lambda e \leq x \leq \lambda e\} \qquad (x\in V)\]
which satisfies $-\|x\|_e e\leq x \leq \|x\|_e e $.
We denote by $(V,e)$ an Archimedean order-unit space $V$ equipped with an Archimedean order
unit $e$ and  the order-unit norm $\|\cdot\|_e$, where the subscript
$e$ will be omitted if it is understood. We call $(V,e)$ a {\it complete Archimedean order-unit space} if the order-unit norm $\|\cdot\|_e$ is complete.

We note that the cone $V_+$ in $(V,e)$ is closed in the order-unit norm and the  Archimedean order-unit $e$
 belongs to the interior
${\rm int}\, V_+$ of  $V_+$ \cite[Theorem 2.2.5]{ellis}. In fact, each interior point $u\in {\rm int}\,V_+$ is an order-unit
(cf.\,Lemma \ref{e})
and the corresponding order-unit norm $\|\cdot\|_u$ is equivalent to $\|\cdot \|_e$.

A linear map $T: V \rightarrow W$ between two order-unit spaces $(V,e)$ and $(W,u)$ is called {\it positive}
if $T(V_+) \subset W_+$. It is called a {\it positive linear functional} on $V$ if $(W,u)=(\mathbb{R},1)$.
The dual $V^*$ of $(V,e)$ is partially ordered by the dual cone $V_+^*$, which
consists of continuous positive  linear functionals on $V$ and is precisely the polar set
\begin{equation}\label{sd}
- V_+^0 := - \{f\in V^*: |f(x)|\leq 1, \forall x\in V_+\}
 =\{ f\in V^*: f(x) \geq 0, \forall x \in V_+\}
 \end{equation}
(cf.\,\cite[p.30]{ellis}).

\begin{lemma}\label{cts} Let  $T: V \rightarrow W$ be a positive linear map between two Archimedean order-unit spaces $(V,e)$ and $(W,u)$.
Then $T$ is continuous and $\|T\|= \|T(e)\|_u$.
\end{lemma}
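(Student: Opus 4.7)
The plan is to transfer the two-sided order bound $-\|x\|_e\, e \le x \le \|x\|_e\, e$ (the defining property of the order-unit norm, noted just before the lemma) through $T$ using positivity and linearity, then read off the $u$-norm bound on the image.

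First I would fix $x \in V$ with $\|x\|_e \le 1$, so that $-e \le x \le e$. Writing $e - x \in V_+$ and $e + x \in V_+$, linearity and positivity of $T$ give $T(e) - T(x) \in W_+$ and $T(e) + T(x) \in W_+$, i.e.\ $-T(e) \le T(x) \le T(e)$. Next I apply the same order-unit bound, now in $(W,u)$, to $T(e)$ itself: $-\|T(e)\|_u\, u \le T(e) \le \|T(e)\|_u\, u$. Chaining the two inequalities yields $-\|T(e)\|_u\, u \le T(x) \le \|T(e)\|_u\, u$, and the infimum definition of $\|\cdot\|_u$ then gives $\|T(x)\|_u \le \|T(e)\|_u$. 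Taking the supremum over the unit ball shows $T$ is bounded with $\|T\| \le \|T(e)\|_u$.

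For the reverse direction, it suffices to observe $\|e\|_e = 1$: clearly $-e \le e \le e$ (using $e \in V_+$, which follows from $e$ being an order-unit in a nonzero space), so $\|e\|_e \le 1$; and if $\|e\|_e < 1$ then $\lambda e \ge e$ for some $\lambda < 1$ would force $(1-\lambda)e \le 0$, hence $-e \in V_+$, contradicting properness of $V_+$ (the trivial case $V = \{0\}$ being vacuous). Consequently $\|T(e)\|_u \le \|T\| \cdot \|e\|_e = \|T\|$, completing the equality.

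There is no substantive obstacle here; the argument is almost entirely unpacking the order-unit norm and using linearity plus positivity. The only thing to be slightly careful about is invoking the Archimedean hypothesis at exactly the right spot—namely, in justifying the displayed inequality $-\|y\|_u u \le y \le \|y\|_u u$ applied to $y = T(e)$, since without Archimedean-ness the infimum in the definition of $\|\cdot\|_u$ need not be realised. This is, however, precisely what the paragraph preceding the lemma records, so it can be cited rather than reproved.
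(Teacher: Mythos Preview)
Your proof is correct and follows essentially the same route as the paper: transfer $-e\le x\le e$ through $T$ by positivity to get $-T(e)\le T(x)\le T(e)$, sandwich by $\pm\|T(e)\|_u\,u$, and then use $\|e\|_e=1$ for the reverse inequality. The only difference is cosmetic---you spell out why $\|e\|_e=1$, whereas the paper simply observes that $e$ lies in the closed unit ball and hence $\|T(e)\|_u\le\|T\|$.
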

\begin{proof} We need to show $\sup \{\|Tx\|_u: -e \leq x \leq e\} <\infty$, where $\{x\in V: -e\leq x \leq e\}$ is
  the closed unit ball of $(V,e)$. Indeed, given $-e\leq x \leq e$, we have
$$-\|T(e)\|_u u \leq -T(e) \leq T(x) \leq T(e)\leq \|T(e)\|_u u$$ by positivity. This implies $\|T(x)\|_u \leq \|T(e)\|_u$
and hence $T$ is continuous with $\|T\| \leq \|T(e)\|_u$. Since $\|T\|=\sup \{\|Tx\|_u: -e \leq x \leq e\}  \geq \|T(e)\|_u$,
we have $\|T\|= \|T(e)\|_u$.
\end{proof}

\begin{proposition} Let $T: (V,e) \rightarrow (V,e)$ be a linear isomorphism such that $T(V_+) = V_+$. Then $T$ is an isometry
if, and only if, $T(e)=e$.
\end{proposition}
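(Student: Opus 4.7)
The plan is to handle the two directions separately, with the forward direction essentially reducing to Lemma \ref{cts} applied to both $T$ and $T^{-1}$, and the reverse direction exploiting the closedness of $V_+$ to turn the norm identity $\|T(e)\|_e = 1$ into an order inequality.

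First I would observe that since $T$ is a linear isomorphism with $T(V_+) = V_+$, the inverse $T^{-1}$ is also a positive linear map from $(V,e)$ to $(V,e)$ with $T^{-1}(V_+) = V_+$. This lets me apply Lemma \ref{cts} symmetrically to $T$ and $T^{-1}$.

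For the forward implication, assume $T(e) = e$. Then $T^{-1}(e) = e$ as well, and Lemma \ref{cts} gives $\|T\| = \|T(e)\|_e = 1$ and $\|T^{-1}\| = \|T^{-1}(e)\|_e = 1$. Hence for every $x \in V$, $\|Tx\|_e \leq \|x\|_e = \|T^{-1}(Tx)\|_e \leq \|Tx\|_e$, so $T$ is an isometry.

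For the reverse implication, assume $T$ is an isometry. Then $\|T(e)\|_e = \|e\|_e = 1$, so by the definition of the order-unit norm, $-(1+\epsilon)e \leq T(e) \leq (1+\epsilon)e$ for every $\epsilon > 0$; in particular $(1+\epsilon)e - T(e) \in V_+$. Since the paper records that $V_+$ is closed in the order-unit norm, letting $\epsilon \to 0$ yields $e - T(e) \in V_+$, that is, $T(e) \leq e$. Running exactly the same argument for $T^{-1}$ gives $T^{-1}(e) \leq e$, and now applying the positive (hence order-preserving) map $T$ to both sides of this inequality gives $e = T(T^{-1}(e)) \leq T(e)$. Combining with $T(e) \leq e$ yields $T(e) = e$. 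The only step that is not purely formal is this last one: one has to remember that $V_+$ is closed so that the strict inequalities coming from the infimum in the order-unit norm can be promoted to the non-strict inequality needed to sandwich $T(e)$ between $e$ and itself.
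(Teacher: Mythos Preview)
Your proof is correct and follows essentially the same route as the paper's: both directions rest on Lemma~\ref{cts} applied to $T$ and $T^{-1}$, and the isometry $\Rightarrow$ $T(e)=e$ direction is concluded by sandwiching $T(e)$ via $T(e)\leq e$ and $T^{-1}(e)\leq e$ together with positivity. Your $\varepsilon$-argument is slightly more work than needed, since the paper already records that $-\|x\|_e\, e \leq x \leq \|x\|_e\, e$, so $\|T(e)\|_e = 1$ gives $-e \leq T(e) \leq e$ directly without invoking closedness of $V_+$.
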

\begin{proof}
Let $T$ be an isometry. By Lemma \ref{cts}, we have $\|T(e)\|=\|T\|=1=\|T^{-1}\|=\|T^{-1}(e)\|$ which implies
$-e\leq T(e) \leq e$ and $-e \leq T^{-1}(e) \leq e$.  Therefore $T(e)=e$ by positivity.

Conversely, Lemma \ref{cts} implies $$\|T\|=\|T(e)\|=\|e\|=1=\|T^{-1}(e)\|=\|T^{-1}\|.$$ Hence $T$ is an isometry.
\end{proof}

We now introduce the concept of an infinite dimensional symmetric cone (cf.\,\cite[p.\,105]{chu})
which is a natural generalisation of the finite dimensional one.

\begin{definition}
Let $V$ be a real Hilbert space with inner product $\langle \cdot,\cdot\rangle$. An open cone
$\Omega$ in $V$ is called {\it symmetric} if it satisfies the following two conditions:
\begin{enumerate}
\item [(i)] (self-duality) $\Omega = \{v\in V: \langle v,x\rangle >0, \forall x\in \overline\Omega \backslash\{0\}\}$;
\item[(ii)] (homogeneity) given $x,y\in \Omega$, there is a continuous linear isomorphism $g: V\rightarrow V$
such that $g(x)=y$.
\end{enumerate}
\end{definition}

Since a cone $\Omega$ in a Hilbert space $V$ is convex, its
weak and norm closures in $V$ coincide and is denoted by $\overline \Omega$.  If $\Omega$ is open,
we have $\Omega ={\rm int}\,\overline \Omega$ from Lemma \ref{2.1}.
By self-duality, the closure $\overline \Omega$ of a symmetric cone $\Omega$ is proper and in view of (\ref{sd}),
also {\it `self-dual'} in the sense of
Connes \cite{con}, namely,
$$ \overline \Omega = \{v\in V:\langle v,x\rangle \geq 0, \forall x\in \overline \Omega\}.$$
Indeed, given $v \in V$ with $\langle v,x \rangle \geq 0$ for all $x \in \overline \Omega$, we have, by picking some $e \in \Omega$,
$$ \langle v + \frac{1}{n} e, x\rangle = \langle v,x\rangle + \frac{1}{n}\langle e,x \rangle > 0 \qquad (n=1,2, \ldots)$$
for $x \in \overline \Omega \backslash \{0\}$. Hence $v + \frac{1}{n} e \in \Omega$ for $n=1,2,\ldots$ and $v \in  \overline \Omega$.

For finite dimensional Euclidean spaces, the preceding definition is the same as the usual one for a symmetric cone \cite{fk}.
In finite dimensions, Koecher and Vinberg's celebrated result states that the interior of the cone $\{x^2: x\in \mathcal{A}\}$ in a
formally real Jordan algebra $\mathcal{A}$ is a symmetric cone and conversely, every symmetric cone is of this form.
We will extend this result to the infinite dimensional setting in the next section. We first discuss how a symmetric cone
relates to the underlying Hilbert space structure.

\begin{lemma}\label{e} Let $V$ be a real vector space, equipped with a norm $\|\cdot\|$ and partially ordered by the closure $\overline \Omega$
of an open cone $\Omega$. Then each point $e\in \Omega$ is an order-unit. If moreover, $e$ is Archimedean,
 then the order-unit norm $\|\cdot\|_e$
satisfies $\|\cdot\|_e \leq c\|\cdot\|$ for some $c>0$.
\end{lemma}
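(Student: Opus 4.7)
The plan is to exploit the openness of $\Omega$ at $e$ to obtain both conclusions in a single stroke. Since $\Omega$ is open and $e \in \Omega$, there is some $r > 0$ such that the norm-ball $\{v \in V : \|v - e\| < r\}$ is contained in $\Omega$. This is the only topological input needed; the rest is just unwinding the definitions.

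Given any $x \in V$ and any scalar $\lambda > \|x\|/r$, the vector $x/\lambda$ has norm strictly less than $r$, so both $e + x/\lambda$ and $e - x/\lambda$ lie in $\Omega$, hence in $\overline\Omega$. This says $-e \leq x/\lambda$ and $x/\lambda \leq e$, and scaling by $\lambda > 0$ (which preserves the cone $\overline\Omega$) yields
\[
-\lambda e \leq x \leq \lambda e.
\]
In particular, every $x \in V$ is dominated in this way, so $e$ is an order-unit. Taking the infimum over admissible $\lambda$ in the definition of $\|\cdot\|_e$ immediately gives $\|x\|_e \leq \|x\|/r$, and setting $c = 1/r$ completes the norm inequality.

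The Archimedean hypothesis plays no quantitative role in the argument above; it is invoked only so that $\|\cdot\|_e$ is genuinely a norm (rather than a seminorm), making the stated inequality meaningful. There is no serious obstacle here: the essence of the lemma is that openness of the cone at an interior point translates, via a ball contained in $\Omega$, directly into the quantitative order-unit property with an explicit constant.
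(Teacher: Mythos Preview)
Your proof is correct and follows essentially the same approach as the paper: both use an open ball about $e$ contained in $\Omega$ to show that $e \pm x/\lambda \in \Omega$ for suitably large $\lambda$, deduce the order-unit bounds, and read off the constant. The only cosmetic difference is that the paper fixes the single scalar $\lambda = 2\|x\|/r$ (yielding $c = 2/r$), whereas you take the infimum over all admissible $\lambda$ to obtain the slightly sharper $c = 1/r$.
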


\begin{proof} Let $e\in \Omega$. Since $\Omega$ is open, there exists $r>0$ such that the open ball $e-B(0,r)$
is contained in $\Omega$, where $B(0,r) = \{x\in V: \|x\| < r\}$. Let $v\in V\backslash \{0\}$.
Then we have $\pm (r/2\|v\|)v \in B(0,r)$
which implies $ e \mp (r/2\|v\|)v \in e -B(0,r) \subset \Omega$, that is
$$- \frac{2\|v\|}{r} e \leq v \leq \frac{2\|v\|}{r} e.$$ This proves that $e$ is an order-unit.
If $e$ is Archimedean, it also implies that the order-unit norm $\|\cdot\|_e$ satisfies
$\|v\|_e \leq (2/r)\|v\|$ for all $v \in V$.
\end{proof}

From now on, the inner product norm of a Hilbert space $V$ will always
be denoted by $\|\cdot\|= \langle \cdot,\cdot\rangle
^{1/2}$. The following result reveals that a real Hilbert space equipped with a symmetric cone is a
complete Archimedean order-unit space.

\begin{lemma} Let $V$ be a real Hilbert space partially ordered by the closure $\overline \Omega$
of a symmetric cone $\Omega$ and let $e\in \Omega$.
Then $e$ is an Archimedean order-unit and the order-unit norm $\|\cdot\|_e$ is equivalent to the inner product norm $\|\cdot\|$ of $V$.
\end{lemma}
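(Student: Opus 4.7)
The plan is to bootstrap from Lemma \ref{e}, which already ensures that $e$ is an order-unit (since $\Omega$ is open) and that, \emph{provided} $e$ is Archimedean, $\|\cdot\|_e \leq c\|\cdot\|$ for some $c>0$. So I need to do two things: (a) show $e$ is Archimedean, and (b) establish the reverse inequality $\|\cdot\| \leq c'\|\cdot\|_e$. Both will be obtained essentially for free from the self-duality property of $\overline\Omega$, which by the discussion preceding the lemma reads $\overline\Omega = \{v\in V:\langle v,x\rangle\geq 0\ \forall x\in\overline\Omega\}$.

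For (a), suppose $\lambda v \leq e$ for every $\lambda\geq 0$, i.e.\ $e - \lambda v \in \overline\Omega$ for all $\lambda\geq 0$. For any fixed $x\in\overline\Omega$, self-duality gives $\langle e,x\rangle \geq \lambda\langle v,x\rangle$ for every $\lambda\geq 0$; letting $\lambda\to\infty$ forces $\langle v,x\rangle\leq 0$. Since this holds for all $x\in\overline\Omega$, self-duality again yields $-v\in\overline\Omega$, i.e.\ $v\leq 0$. Hence $e$ is Archimedean, and Lemma \ref{e} supplies $\|\cdot\|_e \leq c\|\cdot\|$.

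For (b), the key observation is that by definition of the order-unit norm, $(\|v\|_e+\varepsilon)e\pm v\in\overline\Omega$ for every $\varepsilon>0$, so by closedness of $\overline\Omega$ we have $\|v\|_e\, e \pm v \in \overline\Omega$. Applying self-duality with one of these elements as the test vector, the inner product of the two must be nonnegative:
\[
0 \leq \langle \|v\|_e e + v,\ \|v\|_e e - v\rangle = \|v\|_e^2\|e\|^2 - \|v\|^2,
\]
which gives $\|v\| \leq \|e\|\cdot\|v\|_e$. Combined with (a), this establishes the norm equivalence.

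I do not anticipate a serious obstacle: the argument is a clean application of self-duality in both directions, once one recognises the trick of pairing $\|v\|_e e + v$ against $\|v\|_e e - v$ to get the reverse comparison. The only small point to be careful about is justifying that the infimum in the definition of $\|v\|_e$ is attained, which follows from closedness of $\overline\Omega$ in the Hilbert norm (itself a consequence of convexity, since weak and norm closures coincide for convex sets).
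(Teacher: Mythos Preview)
Your proof is correct and follows essentially the same approach as the paper's own argument: Archimedeanness via self-duality and a limiting argument in $\lambda$, then the reverse norm inequality by pairing $\|v\|_e e + v$ against $\|v\|_e e - v$. Your added justification that $\|v\|_e e \pm v \in \overline\Omega$ via closedness is a nice explicit touch that the paper leaves implicit.
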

\begin{proof} By lemma \ref{e}, $e$ is an order-unit. To see that $e$ is Archimedean, let $\lambda v \leq e$ for all $\lambda
\geq 0$. By self-duality of $\Omega$, we have $\langle e-\lambda v, x\rangle \geq 0$ for all $x\in \overline\Omega$, which gives
$\langle e,x\rangle \geq \lambda\langle  v, x\rangle$ for all $\lambda \geq 0$. If $x\in \overline\Omega\backslash\{0\}$, then
$\langle e, x \rangle >0$. It follows that $\langle  v, x\rangle \leq 0$ for all $x\in \overline
\Omega$ and hence $-v \in \overline\Omega$ by self-duality.

For the second assertion, Lemma \ref{e} already implies  $\|\cdot\|_e \leq c\|\cdot\|$ for some $c>0$.

To complete the proof, we show that $\|v\|^2 \leq \langle e,e\rangle \|v\|_e^2$ for all $v\in V$. Indeed, we have
$\|v\|_e e\pm v \in \overline\Omega$ and by self-duality, $\langle \|v\|_e e + v, \|v\|_e e - v\rangle \geq 0$. Expanding the inner product
gives $\|v\|_e^2 \langle e,e\rangle \geq \|v\|^2$.
\end{proof}

Let $V$ be a real Hilbert space partially ordered by the closure of a symmetric cone $\Omega$ and let $e\in \Omega$. Then the previous
lemma implies that a linear map $T : V \rightarrow V$ is continuous with respect to the Hilbert space norm of $V$ if, and only if,
it is continuous with respect to the order-unit norm $\|\cdot\|_e$. In the sequel, we will denote by $\|T\|$ and $\|T\|_e$
the operator norm of $T: V \rightarrow V$ with respect to the Hilbert space norm and order-unit norm of $V$, respectively.

We refer to \cite{bou,up} for definitions and properties of infinite dimensional Banach Lie groups and Lie algebras, which are
analytic manifolds.
Let  $L(V)$ be the Banach algebra of bounded linear operators on $V$, equipped with the Hilbert space operator norm $\|\cdot\|$
and the usual involution $*$.
Then $L(V)$ is a real Banach Lie algebra in the commutator product $[S,T] = ST - TS$ for $S,T\in L(V)$.
We denote by $GL(V)$ the open subgroup of invertible elements in $L(V)$, which is a real Banach Lie group with Lie
 algebra $L(V)$.
Given $S,T \in GL(V)$, we have $\|S^{-1}-T^{-1}\| \leq \|S^{-1}\|\|S-T\|\|T^{-1}\|$. Hence
 the orthogonal group $$O(V)=\{T\in GL(V): \|T\|=\|T^{-1}\|=1\}$$ of $V$,
 consisting of linear isometries of $V$ and equipped with
the norm topology, is a closed subgroup of $GL(V)$ and a real Banach Lie group.

In contrast to the finite dimensional case, a closed subgroup $H$ of an infinite dimensional real Banach Lie group $G$
need not be a Lie group in the relative topology \cite{hk}. Nevertheless,  it can still be topologised (with a finer topology
$\mathcal{T}$)
 to form a Banach Lie group, by \cite[7.8]{up}. In fact, if $\frak g$ is the Lie algebra of $G$, then the Lie algebra of $H$ is given by
$$\frak h = \{X\in \frak g: \exp tX \in H, \forall t\in \mathbb{R}\}$$ and
the inclusion map $(H,\mathcal{T}) \hookrightarrow G$ is analytic.

Henceforth, let $\Omega$ be a symmetric cone in a real Hilbert space $V$.
The open cone $\Omega$ is a real Hilbert manifold modelled on
$V$, where the tangent space $T_\omega \Omega$ at $\omega \in \Omega$ is identified with
$V$. The positive linear maps in $GL(V)$ with positive inverse,
 with respect to the cone
$\Omega$, form a subgroup of $GL(V)$, which will be denoted by
$$ G(\Omega) = \{g\in GL(V): g(\Omega) = \Omega\}.$$
An element $g\in GL(V)$ belongs to
$G(\Omega)$ if and only if $g(\overline\Omega) = \overline\Omega$. Hence $G(\Omega)$ is a closed subgroup of
 $GL(V)$ and can be topologised in a finer topology to
 a real Banach Lie group with Lie algebra
$$\frak g(\Omega) =\{ X\in L(V): \exp t X \in G(\Omega),  \forall t \in \mathbb{R}\}.$$

For each $\omega \in \Omega$, the map $g\in GL(V) \mapsto g(\omega) \in V$ is analytic and its
derivative at the identity of $GL(V)$  is the linear map $X\in L(V) \mapsto X(\omega) \in V$.
Since the inclusion map $G(\Omega)\hookrightarrow GL(V)$ is analytic, the orbital map
$g\in G(\Omega) \mapsto g(\omega) \in \Omega$ is analytic.

By a {\it Lie subgroup} of $G(\Omega)$, we mean a subgroup and submanifold of $G(\Omega)$. For instance,
the connected component $G_0$ of the identity in $G(\Omega)$ is a Lie subgroup of $G(\Omega)$.
 \cite[Chap.\,III,\S1.3]{bou}.
The subgroup $K=G(\Omega) \cap O(V)$ of $G(\Omega)$ is closed in the norm and
any finer topologies of $G(\Omega)$, and also a real Banach Lie group.

Given $g\in G(\Omega)$, we note that its adjoint $g^*$ in $L(V)$ also belongs to $G(\Omega)$. Indeed, for $v\in \Omega$ and $x\in \overline
\Omega\backslash \{0\}$, we have $\langle g^*(v), x\rangle = \langle v, g(x)\rangle >0$ and hence $g^*(v) \in \Omega$.

\begin{lemma}\label{alpha} Let $V$ be a real Hilbert space partially ordered by the closure of
a symmetric cone $\Omega$ and let $e\in \Omega$
with $\alpha \|\cdot\| \leq \|\cdot\|_e \leq \beta \|\cdot\|$ for some $ \beta >\alpha >0$. Then we have
$\frac{\alpha}{\beta} e \leq g(e) \leq \frac{\beta}{\alpha} e$  for all $g \in G(\Omega) \cap O(V)$.
\end{lemma}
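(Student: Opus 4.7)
The plan is to bound $\|g(e)\|_e$ via the order-unit operator norm of $g$, invoking Lemma \ref{cts} together with the equivalence of the two norms, and then apply the same argument to $g^{-1}$ to obtain the lower bound.

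First I would observe that $g \in G(\Omega) \cap O(V)$ implies that $g^{-1} \in G(\Omega) \cap O(V)$ as well, since $g^{-1}(\Omega) = \Omega$ and $g^{-1}$ is a Hilbert-space isometry. In particular, both $g$ and $g^{-1}$ are positive linear maps between the Archimedean order-unit spaces $(V,e)$ and $(V,e)$, so Lemma \ref{cts} applies to each and yields $\|g\|_e = \|g(e)\|_e$ and $\|g^{-1}\|_e = \|g^{-1}(e)\|_e$, where $\|\cdot\|_e$ here denotes the operator norm induced by the order-unit norm.

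Next I would estimate $\|g\|_e$ using the two-sided inequality $\alpha\|\cdot\| \leq \|\cdot\|_e \leq \beta\|\cdot\|$. For any nonzero $v \in V$,
\[
\frac{\|g(v)\|_e}{\|v\|_e} \;\leq\; \frac{\beta\|g(v)\|}{\|v\|_e} \;=\; \frac{\beta\|v\|}{\|v\|_e} \;\leq\; \frac{\beta}{\alpha},
\]
where the middle equality uses that $g$ is a Hilbert-norm isometry. Hence $\|g\|_e \leq \beta/\alpha$, which together with Lemma \ref{cts} gives $\|g(e)\|_e \leq \beta/\alpha$, i.e.\ $-\frac{\beta}{\alpha}e \leq g(e) \leq \frac{\beta}{\alpha}e$. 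The upper bound $g(e) \leq \frac{\beta}{\alpha}e$ is half of the desired conclusion.

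Finally, I would apply the identical argument to $g^{-1}$ to obtain $g^{-1}(e) \leq \frac{\beta}{\alpha} e$, and then use positivity of $g$ to transport this inequality: applying $g$ to both sides yields $e \leq \frac{\beta}{\alpha} g(e)$, which rearranges to $\frac{\alpha}{\beta} e \leq g(e)$. Combining the two bounds delivers the claim. There is no real obstacle here beyond the conceptual one of recognizing that the Hilbert-norm isometry property of $g$ controls the order-unit operator norm only up to the distortion constant $\beta/\alpha$ between the two equivalent norms; once this is noted, the estimate is mechanical.
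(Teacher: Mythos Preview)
Your proof is correct and follows essentially the same route as the paper: bound the order-unit operator norm of $g$ by $\beta/\alpha$ via the norm equivalence and the fact that $\|g\|=1$, invoke Lemma~\ref{cts} to get $\|g(e)\|_e=\|g\|_e\le\beta/\alpha$ for the upper bound, and then run the identical argument on $g^{-1}$ and apply the positive map $g$ to obtain the lower bound. Your write-up simply spells out the steps that the paper leaves implicit.
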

\begin{proof} Let $g \in G(\Omega) \cap O(V)$. Then we have
$$\frac{\alpha}{\beta} \|g\|\leq \|g\|_e \leq \frac{\beta}{\alpha}\|g\|$$
where $\|g\|=1$.
By Lemma \ref{cts}, $\|g(e)\|_e = \|g\|_e$ since $g(\Omega) \subset \Omega$, and the same for $g^{-1}$. It follows that
$$\frac{\alpha}{\beta} e\leq g(e) \leq \frac{\beta}{\alpha}e.$$
\end{proof}

\begin{theorem}\label{fix} Let $V$ be a real Hilbert space partially ordered by the closure of
a symmetric cone $\Omega$. Then there exists
$\omega\in \Omega$ such that $g(\omega) = \omega$ for all $g \in G(\Omega) \cap O(V)$.
\end{theorem}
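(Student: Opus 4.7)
The plan is to produce a common fixed point for the group $K:=G(\Omega)\cap O(V)$ inside a suitable weakly compact convex subset of $\overline{\Omega}$, and then to verify that this fixed point actually lies in the open cone $\Omega$.

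Fix any $e\in\Omega$ and let $\beta>\alpha>0$ be the equivalence constants from the previous lemma, giving $\alpha\|\cdot\|\le\|\cdot\|_e\le\beta\|\cdot\|$. By Lemma~\ref{alpha}, $(\alpha/\beta)e\le g(e)\le(\beta/\alpha)e$ for every $g\in K$, and since each such $g$ is a Hilbert-space isometry one also has $\|g(e)\|=\|e\|$. Let $C$ be the weak closure of $\mathrm{conv}(K\cdot e)$. As a bounded, weakly closed, convex subset of $V$ it is weakly compact, and since every $g\in K$ is a weakly continuous linear map sending the orbit $K\cdot e$ onto itself, $g(C)=C$.

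To produce a fixed point I would run the minimum-norm argument, which is the form Kakutani's fixed-point theorem takes in this Hilbert-space/weak-topology setting: the functional $x\mapsto\|x\|^2$ is weakly lower semicontinuous and strictly convex on $V$, so it attains its minimum over the weakly compact convex set $C$ at a unique point $\omega\in C$. For every $g\in K$, $g(\omega)\in g(C)=C$ and $\|g(\omega)\|=\|\omega\|$, so $g(\omega)$ is also a minimiser; uniqueness forces $g(\omega)=\omega$.

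The main obstacle is then checking $\omega\in\Omega$ and not merely $\omega\in\overline{\Omega}$. The inequality in Lemma~\ref{alpha} places each orbit point $g(e)$ in the convex, weakly closed set $(\alpha/\beta)e+\overline{\Omega}$; this set therefore contains $C$, and so $\omega=(\alpha/\beta)e+u$ for some $u\in\overline{\Omega}$. I would finish by appealing to the standard fact that for a convex set with nonempty interior, the midpoint of any interior point with any closure point lies in the interior: $\tfrac12 p+\tfrac12 q\in\mathrm{int}\,\overline{\Omega}=\Omega$ whenever $p\in\Omega$ and $q\in\overline{\Omega}$. Applying this with $p=(\alpha/\beta)e\in\Omega$ and $q=u\in\overline{\Omega}$ gives $\tfrac12(\alpha/\beta)e+\tfrac12 u\in\Omega$, and the cone property of $\Omega$ doubles this to $\omega=(\alpha/\beta)e+u\in\Omega$, as required.
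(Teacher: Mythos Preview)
Your argument is correct and is genuinely different from the paper's. Both proofs start the same way: fix $e\in\Omega$, use Lemma~\ref{alpha} to trap the orbit $K\cdot e$ in the order interval $[[(\alpha/\beta)e,(\beta/\alpha)e]]$, and pass to the weakly compact closed convex hull $Q$ (your $C$). From there the paper invokes the Kakutani--Markov fixed-point theorem for equicontinuous groups, and spends the bulk of the proof verifying weak equicontinuity of $K$ on $Q$ via a careful decomposition of weak neighbourhoods using elements of $\overline\Omega$. You instead exploit directly that every $g\in K$ is a Hilbert-space isometry: the unique point of minimal norm in $C$ (the Chebyshev centre relative to $0$) must be fixed by $K$, because each $g$ preserves both $C$ and the norm. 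This is a substantially shorter route and uses the Hilbert structure more sharply; the paper's approach is more in the spirit of general topological-vector-space fixed-point theory but pays for that generality with the equicontinuity computation, which is not needed here. Your verification that $\omega\in\Omega$ via $\omega\in(\alpha/\beta)e+\overline\Omega$ is fine; in fact the midpoint-and-double step can be shortened to the observation that $\Omega+\overline\Omega\subset\Omega$ (each translate $q+\Omega$ with $q\in\overline\Omega$ is open and contained in $\overline\Omega$, hence in ${\rm int}\,\overline\Omega=\Omega$), which is implicitly what the paper uses when it writes $[[(\alpha/\beta)e,(\beta/\alpha)e]]\subset\Omega$.
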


\begin{proof} Fix an Archimedean order-unit  $e\in \Omega$ and let
$\alpha \|\cdot\| \leq \|\cdot\|_e \leq \beta \|\cdot\|$ for some $ \beta >\alpha >0$. By Lemma \ref{alpha}, the orbit
$$S = \{g(e): g \in G(\Omega) \cap O(V)\}$$
is contained in the order interval
$$[[ ({\alpha}/{\beta}) e,\, ({\beta}/{\alpha})e]] :=
\{x\in V: ({\alpha}/{\beta}) e \leq x \leq ({\beta}/{\alpha})e\}$$
which is convex, bounded in the order-unit norm $\|\cdot\|_e$ and hence bounded in the Hilbert space norm of $V$.
It is clearly closed in the order-unit norm, and hence in the Hilbert space norm of $V$. It follows that the
order interval $\left[\left[ ({\alpha}/{\beta}) e,\, (\beta/\alpha)e\right]\right]$ is weakly compact in $V$.
Let $Q = \overline{\rm co}\, S$ be the closed convex hull of the orbit $S$. Then $Q \subset [[ ({\alpha}/{\beta}) e,\, ({\beta}/{\alpha})e]]$ and is weakly compact.

Since $G(\Omega)\cap O(V)$ is a group and each $g \in O(V)$ is continuous on $V$, we see that $g(Q) \subset Q$
for all $g\in G(\Omega)\cap O(V)$. Further, $G(\Omega) \cap O(V)$ is equicontinuous
on $Q$ in the weak topology (as defined in \cite[V.10.7]{d}), that is, given
any weak neighbourhood $N$ of $0$, there exists a weak neighbourhood $U$ of $0$ such that for $x,y\in Q$ with $x-y\in U$,
 we have $g(x-y) \in N$ for all $g\in G(\Omega) \cap O(V)$.

To prove equicontinuity, we first observe that for any weak neighbourhood of $0$ of the form $N_0=\{v\in V: |\langle v,z\rangle|< \varepsilon\}$
for some $z\in V\backslash\{0\}$, one can find $z_1,z_2\in \overline\Omega \backslash\{0\}$ such that $N_0$ contains the following weak
neighbourhood of $0$:
$$\{v\in V: |\langle v,z_j\rangle|< \varepsilon/2, j=1,2\}.$$
Indeed, write $z=z_1 - z_2$ with $z_1, z_2 \in \overline\Omega \backslash\{0\}$, then we have
$$|\langle v,z\rangle|= |\langle v,z_1 -z_2\rangle| \leq |\langle v,z_1\rangle|
+ |\langle v,z_2\rangle|< \varepsilon.$$
Hence, given any weak neighbourhood $N$ of $0$, there are positive elements $z_1, \ldots, z_k$
in $\overline\Omega \backslash\{0\}$ such
that
$$N \supset \{v\in V: |\langle v,z_j\rangle|< \varepsilon, j=1, \ldots k\}$$
for some $\varepsilon >0$. We can find $c>1$ such that $z_j \leq c e$ for $j=1, \ldots,k$.
Now, for each $g \in G(\Omega) \cap O(V)$ and $j=1, \ldots,k$, we have
$$0 \leq g(z_j)\leq cg(e) \leq \frac{c\beta}{\alpha} e$$
which gives
\begin{equation}\label{8}
0\leq \langle x, g(z_j)\rangle \leq \left\langle x, (c\beta/\alpha) e\right\rangle
\end{equation}
for all $x \in \overline\Omega$. Pick $\gamma \in (0,1)$ such that
$\displaystyle\gamma <  \frac{\varepsilon\alpha^2}{c(\beta^2-\alpha^2)\langle e,e\rangle}$. Then
$$U=\{v\in V: |\langle  \gamma(v +(\beta^2-\alpha^2)/(\alpha\beta)\, e),\, (c\beta/\alpha)\,e\rangle | <\varepsilon\}.$$
is a weak neighbourhood of $0$ in $V$.
Since $(\alpha/\beta)e \leq g(e) \leq (\beta/\alpha)e$  for all $g \in G(\Omega)\cap O(V)$, by Lemma
\ref{alpha}, we have
$$(\alpha/\beta)\langle e,e\rangle \leq \langle g(e),e\rangle \leq (\beta/\alpha)\langle e,e\rangle
$$ and hence
$$0< \frac{\gamma(\beta^2-\alpha^2)}{\beta^2}\langle e,e\rangle \leq
\frac{\gamma(\beta^2-\alpha^2)}{\alpha \beta}\langle g(e),e\rangle
\leq \frac{c\gamma(\beta^2-\alpha^2)}{\alpha^2}\langle e,e\rangle <\varepsilon$$
 for all $g \in G(\Omega)\cap O(V)$.

Let $x,y\in Q \subset \left[\left[ ({\alpha}/{\beta}) e,\, (\beta/\alpha)e\right]\right]$.
We have
$$ -\left(\frac{\beta}{\alpha} - \frac{\alpha}{\beta}\right)e \leq x-y \leq \left(\frac{\beta}{\alpha} - \frac{\alpha}{\beta}\right)e = \frac{\beta^2-\alpha^2}{\alpha\beta}\,e$$
and in particular,  $0\leq  (x-y)+(\beta^2-\alpha^2)/(\alpha\beta)\,e \leq (x-y)/\gamma
+(\beta^2-\alpha^2)/(\alpha\beta)\, e $ where $\gamma \in (0,1)$. Hence $x-y \in \gamma U$ implies
\begin{eqnarray*}
&&0\leq \langle g(\gamma((x-y)/\gamma + (\beta^2-\alpha^2)/(\alpha\beta)\, e),\, z_j\rangle \\
&=& \langle \gamma\{(x-y)/\gamma + (\beta^2-\alpha^2)/(\alpha\beta)\, e\},\, g^*(z_j)\rangle \\
&\leq& \langle \gamma\{(x-y)/\gamma +(\beta^2-\alpha^2)/(\alpha\beta)\, e\},\, (c\beta/\alpha)e
\rangle  <\varepsilon
\end{eqnarray*}
by (\ref{8}), which gives
$$  - \frac{\gamma (\beta^2-\alpha^2)}{\alpha\beta}\langle g(e),z_j\rangle \leq \langle  g(x-y), z_j\rangle \leq
\varepsilon - \frac{\gamma (\beta^2-\alpha^2)}{\alpha\beta}\langle g(e),z_j\rangle < \varepsilon$$
where (\ref{8}) implies
$$\frac{\gamma (\beta^2-\alpha^2)}{\alpha\beta}\langle g(e),z_j\rangle \leq \frac{\gamma (\beta^2-\alpha^2)}{\alpha\beta}\langle e, \frac{c\beta}{\alpha}e\rangle <\varepsilon$$
and hence
$$|\langle g(x-y), z_j\rangle| < \varepsilon $$
for all $g \in G(\Omega)\cap O(V)$ and $j=1, \ldots, k$. This proves equicontinuity of $G(\Omega) \cap O(V)$.

Hence, by Kakutani's fixed-point theorem \cite[V.10.8]{d}, the group $G(\Omega) \cap O(V)$
has a common fixed point, say, $\omega \in Q \subset [[ ({\alpha}/{\beta}) e,\, ({\beta}/{\alpha})e]]
\subset \Omega$. The proof is complete.
\end{proof}

We note that a bounded linear operator $T$ on a {\it complex} Hilbert space $H$ is hermitian if, and only if, $\|\exp itT\| = 1$ for
all $t \in \mathbb{R}$ (cf.\,\cite[p.\,46]{bon}). If  $\|\exp itT\|\leq M$ for some $M>0$ and
all $t \in \mathbb{R}$, then $\exp itT$  has spectral radius
$$\rho(\exp itT)=\lim_{n\rightarrow \infty}\|(\exp itT)^n\|^{1/n} = \lim_{n\rightarrow \infty} \|\exp intT\|^{1/n} \leq \lim_{n\rightarrow
\infty} M^{1/n} = 1$$ and if $iT$ is hermitian as well, then we have $\|\exp itT\|= \rho(\exp itT) \leq 1$ for all
$t\in \mathbb{R}$,
which implies that $T$ is hermitian and hence $T=0$.
 Given $X \in L(V)$, by considering its complexification $X_{\mathbb{C}}: V_{\mathbb{C}} \rightarrow V_{\mathbb{C}}$
 on the complex Hilbert space $V_{\mathbb{C}}$ and noting that $\exp tX$ is an isometry on $V$ if, and only if,
 $\exp tX_{\mathbb{C}}$ is a unitary operator on $V_{\mathbb{C}}$, we see that
$\|\exp tX\|=1$ for all $t\in \mathbb{R}$ if, and only if, $X$ is skew-symmetric, that is, $X^* = -X$. On the other hand,
if $X^*=X$ and if $\|\exp tX\|\leq M$
for some $M>0$ and for all $t \in \mathbb{R}$, then we must have $X=0$.

The above observation implies that the Lie algebra of the Lie group $K= G(\Omega) \cap O(V)$ is given by
\begin{eqnarray*}
\frak k &=& \{X\in \frak g(\Omega): \exp tX \in G(\Omega) \cap O(V), \forall t\in \mathbb{R}\}\\
&=& \{X\in \frak g (\Omega): X^* =-X\} \subset L(V).
\end{eqnarray*}

For $X\in \frak g(\Omega)$, we have $\exp tX \in G(\Omega)$ and $\exp tX^* = (\exp t X)^* \in G(\Omega)$ for all $t \in \mathbb{R}$.
Hence $X^* \in
\frak g(\Omega)$. 
Let
$$ \frak p = \{ X \in \frak g(\Omega): X^* =X\}.$$
Then we have the direct sum decomposition
$\frak g (\Omega)= \frak k \oplus \frak p$ 
with Lie brackets
$$[ \frak k, \frak p] \subset \frak p , \qquad [\frak p, \frak p ] \subset \frak k.$$
This implies that the inclusion map $\iota: K \hookrightarrow G(\Omega)$
is an immersion, as  its differential
at the identity, $d\iota :\frak k \rightarrow \frak g(\Omega)$, has an image with a direct sum complement $\frak p$
in $\frak g(\Omega)$
(cf.\,\cite[Definition 2.1.16]{chu}). 

By Theorem \ref{fix}, there is an order-unit $\omega \in \Omega$ which is a fixed-point of
the group $K= G(\Omega) \cap O(V)$. Given $X\in \frak k$, we have $\exp tX \in K $
and
$$\omega = \exp t X (\omega) = \omega + t X(\omega) + \frac{t^2}{2!} X(\omega) + \cdots \quad
\mbox{for all}~ t\in \mathbb{R}$$
which implies $X(\omega) = \displaystyle \left.\frac{d}{dt} \right|_{t=0}\exp t X(\omega)=0$. In fact, the converse
also holds.
\begin{lemma}\label{9} Let $\omega \in \Omega$ be a fixed point of $K= G(\Omega)\cap O(V)$ with Lie algebra $\frak k$.
Then we have
$$\frak k = \{ X\in \frak g(\Omega): X(\omega) = 0\}.$$
\end{lemma}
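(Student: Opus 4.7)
The inclusion $\frak k \subseteq \{X \in \frak g(\Omega): X(\omega)=0\}$ has already been verified in the paragraph preceding the lemma, so the real content is the reverse inclusion, on which I will focus.

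The plan is to exploit the direct-sum decomposition $\frak g(\Omega) = \frak k \oplus \frak p$ that the author has just established. Given $X \in \frak g(\Omega)$ with $X(\omega) = 0$, I would split $X = X_{\frak k} + X_{\frak p}$ with $X_{\frak k}^* = -X_{\frak k}$ and $X_{\frak p}^* = X_{\frak p}$ (both summands lie in $\frak g(\Omega)$ since $X^* \in \frak g(\Omega)$). The already-proved direction applied to $X_{\frak k} \in \frak k$ gives $X_{\frak k}(\omega) = 0$, which forces $X_{\frak p}(\omega) = 0$. The problem thus reduces to showing that if $Y \in \frak p$ satisfies $Y(\omega)=0$, then $Y = 0$.

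To handle this reduction I would study the one-parameter group $g_t := \exp tY$. Three properties are immediate: $g_t \in G(\Omega)$ (from $Y \in \frak g(\Omega)$), each $g_t$ is self-adjoint (from $Y^* = Y$), and each $g_t$ fixes $\omega$, because $Y^n(\omega) = Y^{n-1}(Y(\omega)) = 0$ for every $n \geq 1$, so the series defining $\exp tY(\omega)$ collapses to $\omega$. Using the positivity of $g_t$ together with Lemma \ref{cts} applied to the order-unit space $(V,\omega)$, one obtains
$$\|g_t\|_\omega = \|g_t(\omega)\|_\omega = \|\omega\|_\omega = 1 \qquad (t\in\mathbb{R}).$$
Equivalence of $\|\cdot\|_\omega$ with the Hilbert-space norm $\|\cdot\|$ then yields a constant $M > 0$ with $\|\exp tY\| \leq M$ for all real $t$.

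The concluding step invokes the spectral observation recorded just before the lemma: a self-adjoint $Y \in L(V)$ generating a uniformly norm-bounded one-parameter group must vanish. This forces $Y = 0$, hence $X_{\frak p} = 0$ and $X = X_{\frak k} \in \frak k$. The main obstacle, and the place where the Hilbert-space geometry genuinely enters, is precisely this transfer from the order-unit bound to the Hilbert-space bound: positivity alone controls $\|g_t\|_\omega$, but only the equivalence of norms (itself a consequence of self-duality of $\Omega$) lets us apply the spectral rigidity of bounded self-adjoint one-parameter groups.
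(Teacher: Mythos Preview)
Your argument is correct and follows essentially the same route as the paper: decompose $X = X_{\frak k} + X_{\frak p}$, use $X_{\frak k}(\omega)=0$ to reduce to $X_{\frak p}(\omega)=0$, deduce $\|\exp tX_{\frak p}\|_\omega = 1$ via Lemma~\ref{cts}, transfer this to a uniform Hilbert-norm bound by equivalence of norms, and conclude $X_{\frak p}=0$ from the spectral observation about self-adjoint generators of bounded one-parameter groups. The only differences are expository---you spell out the series argument for $g_t(\omega)=\omega$ and cite Lemma~\ref{cts} explicitly---but the logical skeleton is identical.
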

\begin{proof}
Let $X\in \frak g(\Omega)$ and $X(\omega)=0$. We have the decomposition $X= X_{\frak k} + X_{\frak p} \in \frak k \oplus \frak
p$. Since $X_{\frak k} (\omega) =0$ as noted above, we have $X_{\frak p}(\omega)=0$ and hence
$\exp t X_{\frak p} (\omega) = \omega$ for all $t \in \mathbb{R}$. This implies $\|\exp tX_{\frak p}\|_{\omega} =
\|\omega\|_{\omega}= 1$
and there exists $M>0$ such that $\|\exp t X_{\frak p}\| \leq M$ for all $t\in \mathbb{R}$ since the order-unit norm
$\|\cdot\|_{\omega}$ is equivalent to the Hilbert space norm $\|\cdot\|$. On the other hand, $X_{\frak p} \in \frak p$ implies
$X_{\frak p}^* =X_{\frak p}$. Therefore we must have $X_{\frak p} =0$ and $X=X_{\frak k} \in \frak k$.
\end{proof}

\begin{remark}\label{2.9}
The above lemma implies that the Lie algebra
 $$\frak k_{\omega} =\{X\in \frak g(\Omega) : \exp tX (\omega) = \omega,\, \forall t\in \mathbb{R}\}$$
 of   the isotropy subgroup $$K_{\omega}
= \{ g\in G(\Omega): g(\omega)=\omega\} \supset K$$ coincides with the Lie algebra $\frak k$ of
$K$, where $\frak g(\Omega)= \frak k \oplus \frak p = \frak k_\omega \oplus p$. Hence, analogous to the case of $K$, the inclusion map  $K_{\omega} \hookrightarrow G(\Omega)$ is an immersion.
\end{remark}

The differential of the orbital map $\rho_\omega: g\in G(\Omega) \mapsto g(\omega) \in \Omega$ at the identity of $G(\Omega)$ is the
evaluation map $X\in \frak g(\Omega) \mapsto X(\omega) \in V$.
By homogeneity of $\Omega$, we can identify the Lie algebra $\frak g(\Omega)= \frak k_\omega \oplus \frak p$
 with the Lie algebra $\frak{aut}\, \Omega$ of analytic  vector fields on $\Omega$
 that generate one-parameter subgroups of $G(\Omega)$ (cf.\,\cite[p.\,110]{w}).
 This implies that the evaluation map $X \in \frak g(\Omega) \mapsto X(\omega) \in T_\omega \Omega =V $ is surjective.
 It follows that the orbital map $\rho_\omega$ 
is a submersion since $\frak k_\omega$ is the kernel of the evaluation map, which is
complemented in $\frak g(\Omega)$. In particular, $\rho_\omega$ is an open map and
the identity component $G_0$ also acts transitively on $\Omega$ since $\Omega$
is connected and a disjoint union of open $G_0$-orbits.

\section{JH-algebras}

In this final section, we extend to infinite dimension the
celebrated result of
Koecher \cite{k} and Vinberg \cite{v} on the one-one correspondence between finite dimensional symmetric cones and formally real
Jordan algebras. This correspondence is furnished by the assertion that given a finite dimensional formally real Jordan algebra $\mathcal{A}$, which is a Hilbert space in the trace norm, the interior of the cone $\{x^2: x\in \mathcal{A}\}$ is a symmetric cone and, every symmetric cone in an Euclidean space is of this form.
We show in Theorem \ref{symm} below that symmetric cones of all dimensions are in one-one correspondence with the unital JH-algebras.

Let $\mathcal{H}$ be a JH-algebra with identity $\mathbf{1}$ and let
$$\mathcal{H}_+ = \{x^2: x\in \mathcal{H}\}.$$
It has been shown in \cite[Lemma 2.3.17]{chu} that $\mathcal{H}_+$ is a closed cone and its interior
\begin{equation}\label{0}
{\rm int}\,\mathcal{H}_+ = \{ y\in \mathcal{H}_+ : \langle y, a\rangle >0, \forall a\in \mathcal{H}_+\backslash\{0\}\}
\end{equation}
is a symmetric cone consisting of invertible elements in $\mathcal{H}_+$. For each $x^2\in \mathcal{H}_+$, the left multiplication $L_{x^2} : \mathcal{H} \rightarrow \mathcal{H}$ is a positive self-adjoint operator on the Hilbert space
 $\mathcal{H}$ \cite[p.\,108]{chu}.
  Further, ${\rm int}\,\mathcal{H}_+$ is a Hilbert manifold
and a Riemannian symmetric space, which can be infinite dimensional, with Riemannian metric
$$g_{\omega}(u,v) = \langle Q_{\omega^{-1}} ( u), \, v\rangle \qquad (\omega \in {\rm int}\,\mathcal{H}_+,
\,u, v \in \mathcal{H})$$
where the quadratic map $Q_{x^{2}}= 2 L_{x^2}^2 - L_{x^4}= Q_x^2$ is a positive  operator on $\mathcal{H}$,
for $x^2\in\mathcal{H}_+$.

In what follows, we retain the notation in the previous section.

\begin{theorem}\label{symm} Let $\Omega$ be an open cone  in a real Hilbert space. Then $\Omega$ is a symmetric cone
if, and only if, it is of the form
\begin{equation}\label{1.1} \Omega = {\rm int}\,\{a^2: a\in \mathcal{H}\}
\end{equation}
for a unique unital JH-algebra $\mathcal{H}$.
\end{theorem}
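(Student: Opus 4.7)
One direction is essentially immediate: for any unital JH-algebra $\mathcal{H}$, the set ${\rm int}\,\mathcal{H}_+$ is a symmetric cone by \cite[Lemma 2.3.17]{chu}, as recalled just before the theorem. Only the converse requires genuine work, and I focus on manufacturing a unital JH-algebra structure on the real Hilbert space $V$ from a given symmetric cone $\Omega\subset V$.

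Fix the $K$-invariant point $\omega\in\Omega$ supplied by Theorem \ref{fix}, where $K=G(\Omega)\cap O(V)$. By Lemma \ref{9}, $\frak k$ is precisely the kernel of the evaluation map $\mathrm{ev}_\omega:X\in\frak g(\Omega)\mapsto X(\omega)\in V$; as derived in the discussion following Remark \ref{2.9}, this map is surjective, so in conjunction with the decomposition $\frak g(\Omega)=\frak k\oplus\frak p$ its restriction $\frak p\to V$ is a linear isomorphism. Let $L:V\to\frak p$ be its inverse and define a bilinear product on $V$ by $ab:=L(a)(b)$. Three of the four axioms of a unital JH-algebra are then immediate: the identity operator $I$ lies in $\frak p$ with $I(\omega)=\omega$, so $L(\omega)=I$ and $\omega$ is a multiplicative identity; commutativity $ab=ba$ follows from $[L(a),L(b)]\in[\frak p,\frak p]\subset\frak k$ together with Lemma \ref{9}, which forces $[L(a),L(b)](\omega)=0$; and associativity $\langle ab,c\rangle=\langle b,ac\rangle$ of the inner product is exactly the self-adjointness of $L(a)\in\frak p$.

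The substantive step is the Jordan identity $a(ba^2)=(ab)a^2$, equivalently $[L(a),L(a^2)]=0$. Self-adjointness and commutativity alone only yield $[L(a),L(a^2)]\in\frak k$, so additional rigidity from the cone must be brought in. My plan is to study the one-parameter group $\exp(tL(a))\in G(\Omega)$ and its orbit $\gamma(t)=\exp(tL(a))(\omega)$, whose Taylor expansion $\gamma(t)=\omega+ta+\tfrac{t^2}{2}a^2+\tfrac{t^3}{6}L(a)(a^2)+\cdots$ lies entirely in $\Omega$. Combined with the spectral theory of the bounded self-adjoint operator $L(a)$, whose spectral projections can be arranged to yield idempotents in $\overline\Omega$, one obtains power-associativity $L(a)(a^n)=a^{n+1}$, and a Peirce-type decomposition along these idempotents then forces $[L(a),L(a^2)]=0$. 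This is the step where the infinite-dimensional argument must dispense with the characteristic function and Lebesgue integration of the classical Koecher--Vinberg proof, and I expect it to be by far the hardest piece.

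With the Jordan identity in hand, the representation $\exp_J(a)=\exp(L(a))(\omega)\in\Omega$ exhibits every point of $\Omega$ as a Jordan square of $\exp_J(a/2)$, giving $\Omega\subset{\rm int}\{a^2:a\in V\}$, while the reverse containment follows from (\ref{0}) together with the self-duality of $\overline\Omega$. Uniqueness of $\mathcal{H}$ reduces to showing that any JH-algebra structure on $V$ with positive cone $\overline\Omega$ must have its identity at the chosen $\omega$; this should follow from the observation that such an identity is canonically determined by the cone via the natural isotropy action, after which the evaluation isomorphism $\frak p\cong V$ pins down $L$ and hence the entire product.
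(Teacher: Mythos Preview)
Your framework matches the paper's exactly: the $K$-fixed point $\omega$ from Theorem~\ref{fix}, the decomposition $\frak g(\Omega)=\frak k\oplus\frak p$, the evaluation isomorphism $\frak p\to V$ with inverse $L$, and the product $ab:=L(a)b$; your verifications of the identity element, commutativity, and associativity of $\langle\cdot,\cdot\rangle$ are identical to the paper's.

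The gap is the Jordan identity. The paper treats it by a two-line manipulation, rewriting both $x^{2}(yx)$ and $x(x^{2}y)$ as $L(x^{2})L(y)L(x)\omega$ using only $L(z)\omega=z$ and $[L(x),L(w)]\omega=0$. You, by contrast, offer only an unexecuted plan: study $\gamma(t)=\exp(tL(a))(\omega)$, extract idempotents from the spectral resolution of $L(a)$, and run a Peirce decomposition. As written this plan is circular---spectral projections of the self-adjoint operator $L(a)$ are operators on $V$, and promoting them to idempotent \emph{elements} of $(V,\cdot)$ already presupposes the Jordan correspondence you are trying to establish; likewise nothing in the Taylor expansion of $\gamma$ links the operator powers $L(a)^{n}\omega$ to the product powers $a^{n}$ without that correspondence. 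So at the one step you yourself flag as substantive, the proposal is incomplete. On the remaining items your route is also heavier than the paper's: for $\Omega=C$ the paper proves $C\subset\Omega$ (not $\Omega\subset C$) by writing each $x^{2}\in C$ as $a^{2}+\beta\omega$ with $\beta>0$, so that $L_{x^{2}}$ is positive invertible and equals $\exp T$, whence $x^{2}=\exp X(\omega)\in\Omega$ for some $X\in\frak p$, and then invokes self-duality; and uniqueness is dispatched by the one-line order-unit observation $\mathcal H=\overline\Omega-\overline\Omega$, with no isotropy analysis needed.
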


\begin{proof} Given a unital JH-algebra $\mathcal{H}$, the cone
\[ \Omega = {\rm int}\,\{a^2: a\in \mathcal{H}\}\]
is a symmetric cone by \cite[Lemma 2.3.17]{chu}, as noted above.

Conversely, let $\Omega$ be a symmetric cone in a real Hilbert space $V$. We show that it is of the form in
(\ref{1.1}) for a unique JH-algebra $\mathcal{H}$. In fact, we show that $\mathcal{H}$ is the Hilbert space
$V$ itself, equipped with a suitable Jordan product.
Our arguments are a natural extension, albeit with some infinite dimensional adaptation,
of Satake's proof in \cite[Theorem I.8.5]{satake}
for the finite dimensional case (see also \cite{fk}).

We first note that the JH-algebra $\mathcal{H}$ in (\ref{1.1}) must be unique since
$(\mathcal{H}, \Omega)$ is an order-unit space and hence $\mathcal{H}= \overline\Omega - \overline\Omega$.

As before, let $K = G(\Omega) \cap O(V)$. By Theorem \ref{fix},  $K$ is contained in the isotropy subgroup $K_\omega$ of $G(\Omega)$ at some $\omega \in \Omega$. The Lie algebra $\frak g(\Omega)$ of $G(\Omega)$ has a direct sum decomposition
$$\frak g(\Omega) = \frak k_\omega \oplus \frak p$$
where $\frak k_\omega$ is the Lie algebra of $K_\omega$.

As noted at the end of the previous section, homogeneity of $\Omega$ implies that  the evaluation map
$$X\in \frak g(\Omega) \mapsto X(\omega)\in V$$
is surjective. It follows that
 the map $\Phi: X\in \frak p \mapsto X(\omega)\in V$ is a linear isomorphism since it
has
kernel
$$\{X\in \frak g: X(\omega)=0\}=\frak k_\omega$$
by Lemma \ref{9} and Remark \ref{2.9}, where  $\frak k_\omega \cap \frak p =\{0\}$.
Denote the inverse of $\Phi$ by
$L: V \rightarrow \frak p$ so that
$$L(x)\omega = x \qquad (x \in V).$$
On $V$, we defined a product
$$xy := L(x)y = L(x)L(y) \omega \qquad (x,y \in V).$$
We show that $V$ is a Jordan algebra with this product and moreover, it is a JH-algebra with identity $\omega$
such that
$$\Omega = {\rm int}\,\{x^2: x\in V\}$$
which would complete the proof.

To see that $V$ is a Jordan algebra in the above product, let $x,y\in V$. Then we have
$$xy = L(x)y =L(x)L(y) \omega \quad {\rm and} \quad xy-yx = [L(x), L(y)]\omega =0$$
where $L(x), L(y) \in \frak p$ implies $[L(x), L(y)] \in \frak k_\omega$. To prove the Jordan identity, we observe
\[x^2(yx) = x^2 (L(y)L(x)\omega) = L(x^2)L(L(y)L(x)\omega)\omega = L(x^2)L(y)L(x)\omega\]
and
\[ x(x^2y) = L(x) L(L(x^2)L(y)\omega)\omega = L(L(x^2)L(y)\omega)L(x)\omega =L(x^2)L(y)L(x)\omega\]
which equals $x^2(yx)$. This proves that $V$ is a Jordan algebra.

Evidently $\omega$ is the identity in $V$ as $x\omega = L(x)\omega =x$ for all $x\in V$. To see that the inner product
$\langle \cdot, \cdot \rangle$ is associative, we first note that $L(x)^* = L(x)$ since $L(x) \in \frak p$.
Given $x,y,z \in V$, we have
\[\langle xy, z\rangle = \langle L(x)L(y)\omega, z\rangle =\langle L(y)\omega, L(x)z\rangle = \langle y, xz\rangle\]
which shows that $V$ is a JH-algebra.

Finally, we show that $\Omega$ is identical with the symmetric cone
$$ C := {\rm int}\,\{x^2: x\in V\}.$$
Let $x^2 \in C$. We have noted previously that the left multiplication $L_{a^2}$ is a positive self-adjoint operator
on the Hilbert space $V$, for each $a\in V$. Since $x^2$ is an interior point of the cone, it has been shown in \cite[p.\,109]{chu} that $x^2 = a^2 + \beta \omega$
 for some $a\in V$ and $\beta >0$. This implies that $L_{x^2} = L_{a^2} + \beta L_\omega$, where $L_\omega$ is the
 identity operator on $V$. Hence $L_{x^2}$ is
  a positive invertible operator in $L(V)$ and by spectral theory, there exists $T\in L(V)$
such that $L_{x^2} = \exp T$. It follows that
$$x^2  = L_{x^2}(\omega) = \exp T (\omega) = \exp X (\omega) $$
for some $X\in \frak p$. Since $\exp X \in G(\Omega)$,
we have $\exp X(\omega) \in \Omega$, that is, $x^2 \in \Omega$. We have shown that
$C \subset \Omega$ and they must be equal by self-duality.

\end{proof}

In finite dimensions, the above theorem is exactly the aforementioned result of Koecher and Vinberg since
the finite dimensional unital JH-algebras coincide with the formally real Jordan algebras. The following corollary
of the theorem is immediate.

\begin{corollary} A symmetric cone in a Hilbert space $V$ carries the structure of a Riemannian symmetric space.
\end{corollary}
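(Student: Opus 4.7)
The plan is to reduce the corollary to Theorem \ref{symm} and then invoke the Jordan-theoretic construction of the Riemannian symmetric space structure on the interior of the cone of squares, which was recorded (with attribution to \cite{chu}) in the paragraph immediately preceding Theorem \ref{symm}.

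First, given a symmetric cone $\Omega$ in the real Hilbert space $V$, I would apply Theorem \ref{symm} to produce a unique unital JH-algebra $\mathcal{H}$ whose underlying Hilbert space is $V$ itself, with identity element $\omega \in \Omega$, such that $\Omega = \mathrm{int}\,\{a^2 : a \in \mathcal{H}\}$. This reduces the problem to exhibiting a Riemannian symmetric space structure on $\mathrm{int}\,\mathcal{H}_+$.

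Next, I would equip $\Omega$ at each point $\omega \in \Omega$ with the tangent-space inner product
\[ g_\omega(u,v) \;=\; \langle Q_{\omega^{-1}}(u),\,v\rangle \qquad (u,v \in T_\omega\Omega \cong V),\]
which is well-defined and positive definite because the quadratic operator $Q_\omega = 2L_\omega^2 - L_{\omega^2}$ acting on the Hilbert space $\mathcal{H}$ is positive, self-adjoint and, since $\omega$ is invertible in $\mathcal{H}$, also invertible, with $Q_{\omega^{-1}} = Q_\omega^{-1}$. Smoothness in $\omega$ follows from analyticity of the Jordan operations on the open subset $\Omega$ of the Banach algebra $\mathcal{H}$.

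Then I would verify that $g$ is invariant under the transitive action of $G(\Omega)$ (equivalently, of the identity component $G_0$ exhibited at the end of Section~2), using the fundamental quadratic identity $Q_{T\omega} = T\,Q_\omega\,T^*$ valid for $T = L_a$ and $T = Q_a$ with $a$ invertible, together with self-adjointness with respect to $\langle\cdot,\cdot\rangle$. The geodesic symmetry at the identity is then the Jordan inversion $\sigma(x) = x^{-1} = Q_x^{-1}(x)$, which is an involutive $g$-isometry fixing $\omega$ with differential $-\mathrm{id}$ at $\omega$; transporting $\sigma$ by $G_0$ produces an involutive isometry at every other point of $\Omega$, so $(\Omega, g)$ is a Riemannian symmetric space.

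I do not expect a genuine obstacle here: the hard analytical and Jordan-theoretic content — that $\Omega = \mathrm{int}\,\mathcal{H}_+$ admits precisely this Riemannian symmetric space structure — is already contained in \cite[Lemma 2.3.17]{chu} and the accompanying discussion in \cite[p.\,108--109]{chu}, and the infinite-dimensional Lie-group technicalities (immersion property of $K \hookrightarrow G(\Omega)$, smoothness and transitivity of the $G_0$-action) have been handled in Section~2. The corollary is therefore an immediate consequence of Theorem \ref{symm} combined with these cited results.
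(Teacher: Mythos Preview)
Your proposal is correct and follows exactly the paper's approach: the paper simply declares the corollary ``immediate'' from Theorem \ref{symm} together with the already-recorded fact (cited from \cite{chu}) that ${\rm int}\,\mathcal{H}_+$ is a Riemannian symmetric space with the metric $g_\omega(u,v)=\langle Q_{\omega^{-1}}u,v\rangle$. Your additional sketch of the metric, its $G(\Omega)$-invariance, and the geodesic symmetry $x\mapsto x^{-1}$ merely unpacks what the paper leaves implicit.
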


We conclude with two examples of infinite dimensional unital JH-algebras.

\begin{example} Given any real Hilbert space $H$ with inner product $\langle\cdot,\cdot\rangle$, the Hilbert space direct sum $H\oplus\mathbb{R}$, called a {\it spin factor}, is a JH-algebra with identity $0\oplus 1$ in the following Jordan product
$$(a\oplus \alpha)(b\oplus \beta) := (\beta a +\alpha b) \oplus
(\langle a, b\rangle +\alpha \beta).$$ In particular, for $a,b\in H$, we have
$(a\oplus 0)(b\oplus 0) = \langle a, b\rangle (0\oplus 1)$, which is a scalar multiple of
the identity.
\end{example}

Given a spin factor $V$, we denote by $V_0$ the orthogonal complement of the identity in $V$. In the above example, $V_0$
is just the Hilbert space $H$. For each $v\in V_0$, we observe that $v^2$ is a scalar multiple of the identity.

\begin{example} Let $Z= V \oplus W$  be the Hilbert space direct sum of two spin factors $V$ and $W$, equipped with
coordinatewise Jordan product. Then it can be verified readily that $Z$
is a JH-algebra with identity $(e_V, e_W)$, where $e_V$ is the identity of $V$ and $e_W$ that of $W$.
Further, $Z$ is not a spin factor. Indeed, the orthogonal complement $Z_0$
 of the identity contains $V_0 \oplus W_0$ and for $z= (v,w) \in V_0\oplus W_0$, we have $z^2= (v^2, w^2)$
which need not be a scalar multiple of  $(e_V, e_W)$.
\end{example}

\end{document}